\author{R.R. Kamalian}
\title{Examples of cyclically-interval non-colorable bipartite graphs}
\in \mathbb{N}$, let $\mathfrak{M}_t$ be the set of graphs for
\begin{document}

\maketitle

\section{Introduction}
We consider undirected, simple, finite, and connected graphs. For a
graph $G$ we denote by $V(G)$ and $E(G)$ the sets of its vertices
and edges, respectively. For a graph $G$, we denote by $\Delta (G)$
and $\chi ^{\prime }(G)$ the maximum degree of a vertex of $G$ and
the chromatic index of $G$ \cite{Vizing2}, respectively. The terms
and concepts which are not defined can be found in \cite{West1}.

For an arbitrary finite set $A$, we denote by $|A|$ the number of
elements of $A$. The set of positive integers is denoted by
$\mathbb{N}$. An arbitrary nonempty subset of consecutive integers
is called an interval. An interval with the minimum element $p$ and
the maximum element $q$ is denoted by $[p,q]$.

For any $t\in \mathbb{N}$ and arbitrary integers $i_{1},i_{2}$
satisfying the conditions $i_{1}\in[1,t]$, $i_{2}\in[1,t]$, we
define \cite{Shved1_11, Moldova} the sets
$intcyc_{1}[(i_{1},i_{2}),t],$ $intcyc_{1}((i_{1},i_{2}),t),$
$intcyc_{2}((i_{1},i_{2}),t),$ $intcyc_{2}[(i_{1},i_{2}),t]$ as
follows:
$$intcyc_{1}[(i_{1},i_{2}),t]\equiv [\min
\{i_{1},i_{2}\},\max\{i_{1},i_{2}\}],$$
$$intcyc_{1}((i_{1},i_{2}),t)\equiv
intcyc_{1}[(i_{1},i_{2}),t]\backslash (\{i_{1}\}\cup \{i_{2}\}),$$
$$intcyc_{2}((i_{1},i_{2}),t)\equiv [1,t]\backslash
intcyc_{1}[(i_{1},i_{2}),t],$$
$$intcyc_{2}[(i_{1},i_{2}),t]\equiv [1,t]\backslash
intcyc_{1}((i_{1},i_{2}),t).$$

If $t\in \mathbb{N}$ and $Q$ is a non-empty subset of the set
$\mathbb{N}$, then $Q$ is called a $t$-cyclic interval if there
exist integers $i_{1},i_{2},j_{0}$ satisfying the conditions
$i_{1}\in[1,t]$, $i_{2}\in[1,t]$, $ j_{0}\in \{1,2\}$,
$Q=intcyc_{j_{0}}[(i_{1},i_{2}),t]$.

A function $\varphi:E(G)\rightarrow [1,t]$ is called a proper edge
$t$-coloring of a graph $G$ if adjacent edges are colored
differently and each of $t$ colors is used.

For a graph $G$ and a positive integer $t$, where $\chi'(G)\leq
t\leq|E(G)|$, we denote by $\alpha(G,t)$ the set of all proper edge
$t$-colorings of $G$. Let us set
$$
\alpha(G)\equiv \bigcup_{t=\chi'(G)}^{|E(G)|}\alpha(G,t).
$$

If $G$ is a graph, $\varphi\in\alpha(G)$, and $x\in V(G)$, then the
set $\{\varphi(e)/ e\in E(G), e \textrm{ is incident with } x\}$ is
denoted by $S_G(x,\varphi)$.

A proper edge $t$-coloring $\varphi$ of a graph $G$ is called a
cyclically-interval $t$-coloring of $G$, if for any $x\in V(G)$ at
least one of the following two conditions holds: a) $S_G(x,\varphi)$
is an interval, b) $[1,t]\setminus S_G(x,\varphi)$ is an interval.

For any $t\in \mathbb{N}$, we denote by $\mathfrak{M}_t$ the set of
graphs for which there exists a cyclically-interval $t$-coloring.
Let
$$\mathfrak{M}\equiv\bigcup_{t\geq1}\mathfrak{M}_t.$$

For an arbitrary tree $D$, it was shown in \cite{Shved1_11, Moldova}
that $D\in\mathfrak{M}$, and, moreover, all possible values of $t$
were found for which $D\in\mathfrak{M}_t$. For an arbitrary simple
cycle $C$, it was shown in \cite{Csit10, China} that
$C\in\mathfrak{M}$, and, moreover, all possible values of $t$ were
found for which $C\in\mathfrak{M}_t$. Some interesting results on
this and related topics were obtained in \cite{Alt, Barth8, Daus9,
DeWerra7, DeWerra6, Kubale, Kubale_Nadolski, Nadolski,
Asratian_Diss, Jensen_Toft, Diss3}.

In this paper, the examples of bipartite graphs that do not belong
to the class $\mathfrak{M}$ are constructed.

For any integer $m\geq2$, set:
$$
V_{0,m}\equiv\{x_0\},\qquad V_{1,m}\equiv\{x_{i,j}/\;1\leq i<j\leq
m\},
$$
$$
V_{2,m}\equiv\{y_{p,q}/\;1\leq p\leq m,1\leq q\leq m\},
$$
$$
E'_m\equiv\{(x_0,y_{p,q})/\;1\leq p\leq m,1\leq q\leq m\}.
$$

For any integers $i,j,m$ satisfying the inequalities $m\geq2$,
$1\leq i<j\leq m$, set:
$$
E''_{i,j,m}\equiv\{(x_{i,j},y_{i,q})/\;1\leq q\leq
m\}\cup\{(x_{i,j},y_{j,q})/\;1\leq q\leq m\}.
$$

For any integer $m\geq2$, let us define a graph $G(m)$ by the
following way:
$$
G(m)\equiv\Bigg(\bigcup_{k=0}^{2}V_{k,m},E'_m\cup\Bigg(\bigcup_{1\leq
i<j\leq m}E''_{i,j,m}\Bigg)\Bigg).
$$

It is not difficult to see that for any integer $m\geq2$, $G(m)$ is
a bipartite graph with $\Delta(G(m))=\chi'(G(m))=m^2$,
$|V(G(m))|=\frac{3m^2-m}{2}+1$, $|E(G(m))|=m^3$.

\begin{thm}
For any integer $m\geq8$, $G(m)\not\in\mathfrak{M}$.
\end{thm}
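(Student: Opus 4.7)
The plan is to suppose $\varphi$ is a cyclically-interval $t$-coloring of $G(m)$ and derive a contradiction by forcing the $m^2$ colors at $x_0$ to fit inside a single short cyclic arc. Since $\chi'(G(m))=m^2$ we have $t\geq m^2$. The set $S_{G(m)}(x_0,\varphi)$ has exactly $m^2$ elements and is a $t$-cyclic interval; invoking the invariance of the cyclically-interval property under a cyclic rotation of the color palette, I may normalize so that $S_{G(m)}(x_0,\varphi)=[1,m^2]$. Writing $c_{p,q}:=\varphi(x_0,y_{p,q})$, the colors $c_{p,q}$ then run precisely over $[1,m^2]$.

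The core lemma asserts that for every pair $i<j$ the set $R_{i,j}:=\{c_{p,q}:p\in\{i,j\},\;q\in[1,m]\}$ lies inside a $t$-cyclic interval of length at most $4m-2$. This uses two ingredients: first, $|S_{G(m)}(x_{i,j},\varphi)|=2m$ and $|S_{G(m)}(y_{p,q},\varphi)|=m$ force these sets to be $t$-cyclic intervals of the corresponding sizes; second, for each $p\in\{i,j\}$ and each $q$ the two colors $c_{p,q}$ and $\varphi(x_{i,j},y_{p,q})$ both lie in $S_{G(m)}(y_{p,q},\varphi)$, so their cyclic distance is at most $m-1$. Therefore each $c_{p,q}$ lies within cyclic distance $m-1$ of some color in $S_{G(m)}(x_{i,j},\varphi)$, placing $R_{i,j}$ inside the cyclic $(m-1)$-neighborhood of this length-$2m$ arc, which is itself a $t$-cyclic interval of length at most $2m+2(m-1)=4m-2$.

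To conclude, set $R_i:=\{c_{i,q}:q\in[1,m]\}$. The $R_i$ partition $[1,m^2]$ and $R_{i,j}=R_i\cup R_j$. Pick any $p_0\in R_1$. For each $j\in[2,m]$ the arc of length at most $4m-2$ containing $R_1\cup R_j$ also contains $p_0$, so every element of $R_j$ is at cyclic distance at most $4m-3$ from $p_0$; the same bound applies to $R_1$ itself. Consequently all $m^2$ elements of $[1,m^2]=\bigcup_i R_i$ lie within cyclic distance $4m-3$ of $p_0$. The number of such elements is $\min\{2(4m-3)+1,\,t\}=8m-5$, since $t\geq m^2\geq 8m-5$ for $m\geq 8$. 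This forces $m^2\leq 8m-5$, but $m^2-8m+5=(m-4)^2-11$ equals $5$ at $m=8$ and is strictly increasing thereafter, yielding the desired contradiction.

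The principal technical point to handle carefully is the cyclic-distance bookkeeping: I need every $t$-cyclic interval of length $\ell$ to really deliver cyclic distance at most $\ell-1$ between its elements, and I need the $(m-1)$-neighborhood of a length-$2m$ cyclic interval to be a genuine length-$(4m-2)$ cyclic interval rather than all of $[1,t]$. Both conditions require the relevant lengths to be comfortably below $t$, and $t\geq m^2$ combined with $m\geq 8$ supplies exactly the slack needed; this is also precisely the place where the threshold $m\geq 8$ enters the argument.
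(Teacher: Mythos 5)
Your proof is correct, and its engine is the same as the paper's: the observation that for a path $y_{p,q}\,$--$\,x_{i,j}\,$--$\,y_{p',q'}$ the three palettes are overlapping $t$-cyclic intervals of sizes $m$, $2m$, $m$, so the $x_0$-colors of the two $y$-vertices are trapped in a cyclic arc of length at most $4m-2$. Where you diverge is in how the contradiction is extracted. The paper applies the lemma to a single, hand-picked pair of colors, namely $1$ and $\lfloor m^2/2\rfloor$, whose common $V_{1,m}$-neighbor exists by construction, and observes that these two colors are at cyclic distance greater than $4m-3$ in $[1,t]$ (this is exactly where $m\geq 8$ enters, via $4m-1\leq\lfloor m^2/2\rfloor\leq t-4m+3$). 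You instead aggregate the lemma over all pairs $(1,j)$, anchor at an arbitrary $p_0\in R_1$, and count: the whole palette $[1,m^2]$ must fit in the radius-$(4m-3)$ cyclic ball around $p_0$, which has only $8m-5$ elements, and $m^2>8m-5$ precisely from $m=8$ onward. Your version costs a little more bookkeeping (the covering argument over $m$ rows, and the check that the relevant arcs do not wrap around, which $t\geq m^2$ supplies) but buys a cleaner explanation of the threshold: $m\geq 8$ is exactly where $m^2$ outgrows $8m-5$, rather than where a particular pair of colors happens to be far enough apart. Both arguments are tight at the same place, which is reassuring rather than coincidental.
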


\begin{proof}
Assume the contrary. It means that there exist integers
$m_0,t_0,k_0$, satisfying the conditions $m_0\geq8$, $m_0^2\leq
t_0\leq m_0^3$, $t_0=m_0^2+k_0$, $0\leq k_0\leq m_0^3-m_0^2$,
$G(m_0)\in\mathfrak{M}_{t_0}$.

Let $\varphi_0$ be a cyclically-interval $t_0$-coloring of the graph
$G(m_0)$. Without loss of generality, we can suppose that
$S_{G(m_0)}(x_0,\varphi_0)=[1,m_0^2]$. Let us consider the edges
$e'$ and $e''$ of the graph $G(m_0)$, which are incident with the
vertex $x_0$ and satisfy the equalities $\varphi_0(e')=1$,
$\varphi_0(e'')=\lfloor\frac{m_0^2}{2}\rfloor$.

Suppose that $e'=(x_0,y')$, $e''=(x_0,y'')$. Clearly, there exists a
vertex $\widetilde{x}\in V_{1,m_0}$ in the graph $G(m_0)$ which is
adjacent to the vertices $y'$ and $y''$. It is not difficult to see
that $S_{G(m_0)}(y',\varphi_0)\cup
S_{G(m_0)}(\widetilde{x},\varphi_0)\cup S_{G(m_0)}(y'',\varphi_0)$
is a $t_0$-cyclic interval.

Clearly, the inequalities $m_0^2+k_0-4m_0+4>4m_0-2$ and
$4m_0-1\leq\lfloor\frac{m_0^2}{2}\rfloor\leq m_0^2+k_0-4m_0+3$ are
true. Consequently, $\lfloor\frac{m_0^2}{2}\rfloor\in
intcyc_1((4m_0-2,m_0^2+k_0-4m_0+4),m_0^2+k_0)$. But it is
incompatible with the evident relations
$\lfloor\frac{m_0^2}{2}\rfloor\in S_{G(m_0)}(y'',\varphi_0)$ and
$S_{G(m_0)}(y',\varphi_0)\cup
S_{G(m_0)}(\widetilde{x},\varphi_0)\cup
S_{G(m_0)}(y'',\varphi_0)\subseteq
intcyc_2[(4m_0-2,m_0^2+k_0-4m_0+4),m_0^2+k_0]$. Contradiction.

\end{proof}

The author thanks P.A. Petrosyan for his attention to this work.

\end{document}